\documentclass[12pt,a4paper]{amsart}
\usepackage[english]{babel}
\usepackage[a4paper,inner=2.8cm,outer=2.5cm,top=2.8cm,bottom=2.5cm]{geometry}                
\usepackage{amsmath,amssymb,amscd,amsthm,amsfonts,anyfontsize,color,enumerate,fix-cm,layout,lipsum,lpic,mathrsfs,mdwlist,stmaryrd,tensor,tikz,thmtools,xspace}

\usepackage{hyperref}
\usepackage[all]{xy}
\usepackage{mathtools}
%\mathtoolsset{showonlyrefs}
\usepackage{xfrac}
\theoremstyle{plain}                 
\newtheorem{theorem}{Theorem}[section]     
\newtheorem{proposition}[theorem]{Proposition}

\theoremstyle{definition}           
\newtheorem{definition}[theorem]{Definition}    
 
\theoremstyle{remark}       
\newtheorem{remark}[theorem]{Remark}    
\raggedbottom
\hypersetup{colorlinks=true,linkcolor=blue,citecolor=purple,filecolor=magenta,urlcolor=cyan}

\begin{document}

\begin{flushleft}
{\Large
\textbf\newline{The state--sum invariants for virtual knots}
}

\bigskip

 A.A.Kazakov
\\
\bigskip
Moscow State University, Faculty of Mechanics and Mathematics, Centre of Integrable Systems, P. G. Demidov Yaroslavl State University, Center of Fundamental Mathematics, Moscow Institute of Physics and Technology.

\bigskip
 anton.kazakov.4@mail.ru

\section*{Abstract}
  We construct the new non-trivial state--sum invariants for virtual knots and links by a generalization of the powerful Carter--Saito--Jelsovsky--Kamada--Langford  theorem for classical knots. The main result of this work is based on  cohomology quandle theory and   colorings of virtual knot and link diagrams  by quandle elements.
\bigskip

\noindent {Keywords:} Virtual knots, virtual links, quandle, invariant state--sum, cohomology of quandle.

\end{flushleft}

\section{Introducrion}

The theory of virtual knots discovered and described by Kauffman \cite{Kau} arises from the study of  knots in thickened surfaces, which is the natural generalization of classical knot theory. At the present time, this theory is well--developed  and  has many applications in other  fields of mathematics. For instance, virtual knots play an important role in  combinatorics of Gauss diagrams and codes \cite{Kau} \cite{KauM}. Also, virtual knots relate to the theory of virtual manifolds and the generalization of Vitten--Reshetikhin--Turaev invariants \cite{DLK}.

Virtual analogues of knot invariants are in the focus of many research dedicated to virtual knot theory \cite{Kau}. The purpose of this paper is to construct new non--trivial state--sum invariants of virtual knots and links, which are obtained by a generalization of Carter--Saito--Jelsovsky--Kamada--Langford theorem for classical knots \cite{Car}.

\subsection{Kauffman theory of virtual knots and links}
Similarly to the  classical knots there is the diagram knot technique and the complete analogue of the classical  Reidemeister theorem for  virtual knots \cite{Kau}, \cite{KauM}, which play the central role in virtual knot theory.  We define diagrams for virtual knots or links as it is defined in  classical knot or link  theory, however in comparison with the classical case  some intersections of  virtual knot or link diagrams might be  virtual (see Fig.\ref{fig:1}).

\begin{figure}[ht]

\includegraphics[width=0.8\textwidth]{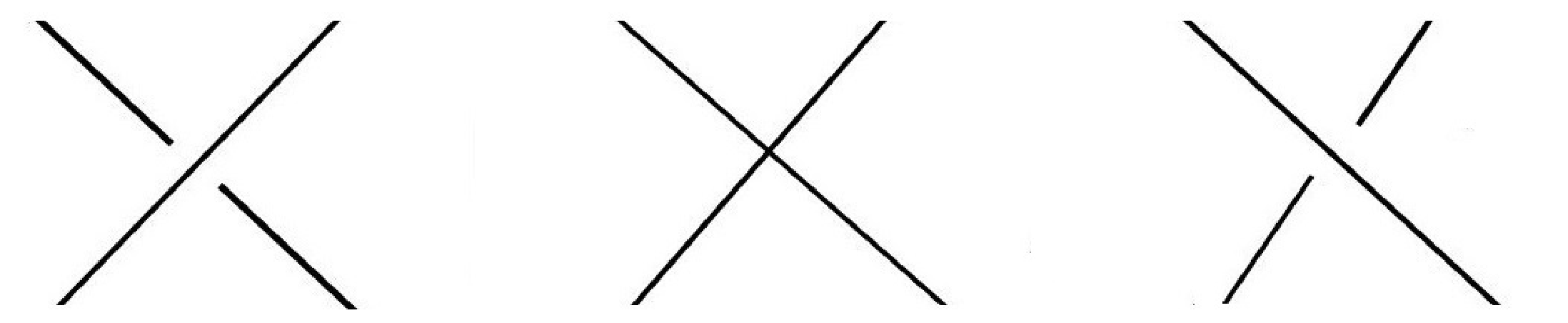}
\caption{The classical  and virtual (in the middle) intersections.}\label{fig:1}
\end{figure}

  According to the virtual Reidemeister theorem \cite{Kau}, \cite{KauM} we consider two virtual knot diagrams equivalent, if one of them can be transformed to another by  isotopy and a finite  sequence of classical Reidemeister moves (see  Fig.\ref{fig:2})  and four additional virtual Reidemeister moves (see  Fig.\ref{fig:3}). 

  \begin{figure}[ht]
\includegraphics[width=0.75\textwidth]{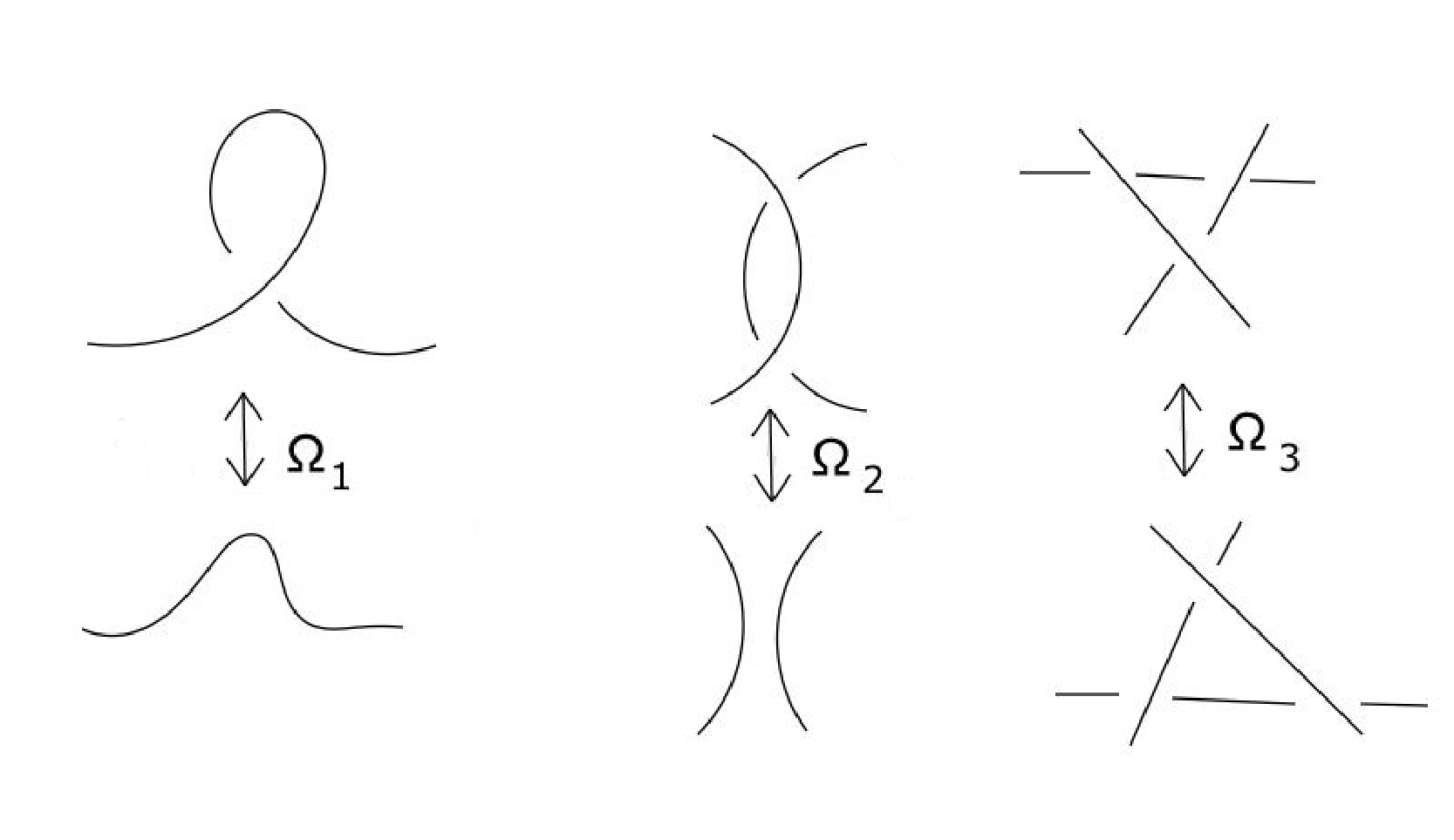}
\caption{The classical Reidemeister  moves.}\label{fig:2}
\end{figure}
 
  \begin{figure}[ht]

\includegraphics[width=0.75\textwidth]{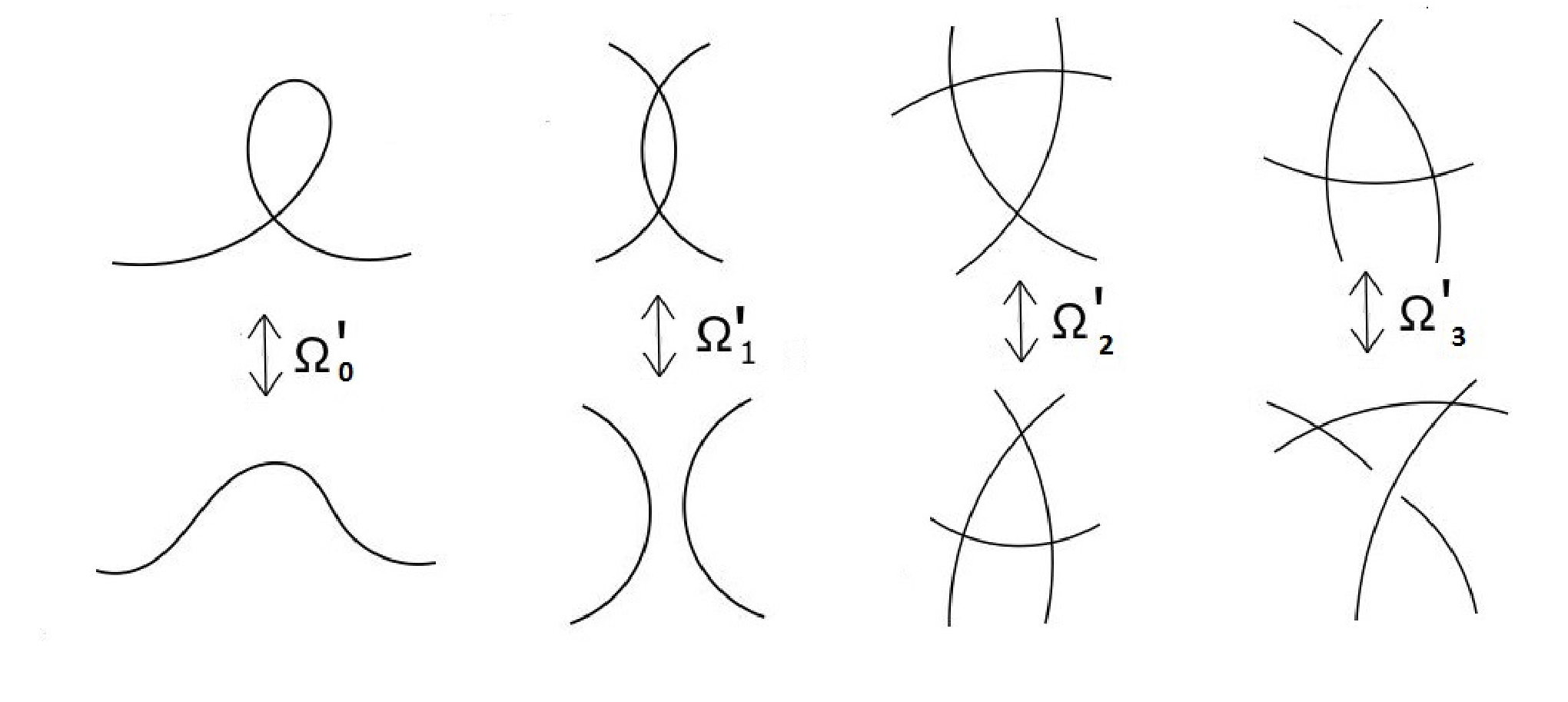}
\caption{The virtual Reidemeister  moves.}\label{fig:3}
\end{figure} 

\subsection{The Carter--Saito--Jelsovsky--Kamada--Langford theorem for classical knots and links}

 In this subsection we briefly examine the construction of a powerful family of knot invariants by Carter, Saito et al. \cite{Car}. Their approach is based on  the following algebraic objects (see \cite{Car}, \cite{Joi}, \cite{Mat} for more details):
 
  \begin{definition} \label{qa}
   A quandle is a set equipped with a binary operation $\ast$ satisfying the following axioms:
 
  \begin{equation} \label{qa_1}
    \forall a \in G : a \ast a = a;  
  \end{equation}
  \begin{equation} \label{qa_2}
    \forall a , b \in G  \: \exists !  x \in G: x \ast a = b;  
  \end{equation}
  \begin{equation} \label{qa_3}
   \forall a , b , c \in G : (a\ast b)\ast c=(a \ast c)\ast (b \ast c).  
  \end{equation}

    \end{definition}

   Also we have to recall the definition  of a quandle 2--cocycle \cite{Car}, \cite{Joi}, \cite{Mat}:
  
  \begin{definition} \label{qcc} A 2--cocycle $\phi$  is a map $\phi : G \times G \rightarrow R$ (here $R$ is an Abelian ring) which satisfies the following identities:

 \begin{equation} \label{qc_1}
    \forall a \in G : \phi (a , a)=1;   
  \end{equation}
  
 \begin{equation} \label{qc_2} 
 \forall a , b , c \in G : \phi(a, b)\phi(a \ast b, c) = \phi(a, c)\phi(a \ast c, b \ast c).
  \end{equation}

  A 2--cocycle $\phi$  is called a coboundary 2--cocycle if the following holds:
  \begin{equation} \label{triv} 
 \phi = \psi (x) \psi^{-1}( x \ast y),
  \end{equation}
  here  $\psi$ is a  map  $\psi: G \rightarrow R.$ 

 2--cocycles $\phi$ and $\phi'$ are called cohomologous if the following holds:
 \begin{equation} \label{coh} 
 \phi = \phi \phi',
  \end{equation}
  here  $\phi$ is a coboundary 2--cocycle.
  
\end{definition}
    
  Now we are ready to describe the Carter--Saito et al. construction for classical knots and links invariants. Let us fix a quandle $G,$ some of its 2--cocycle $\phi$ and consider an oriented diagram of a knot (or a link). Then we color its arcs with elements of $G$. We will say that its arc coloring is possible if the following two rules (see  Fig.\ref{fig:4}) are satisfied for each diagram intersection:
  
  \begin{figure}[ht]
\includegraphics[width=0.55\textwidth]{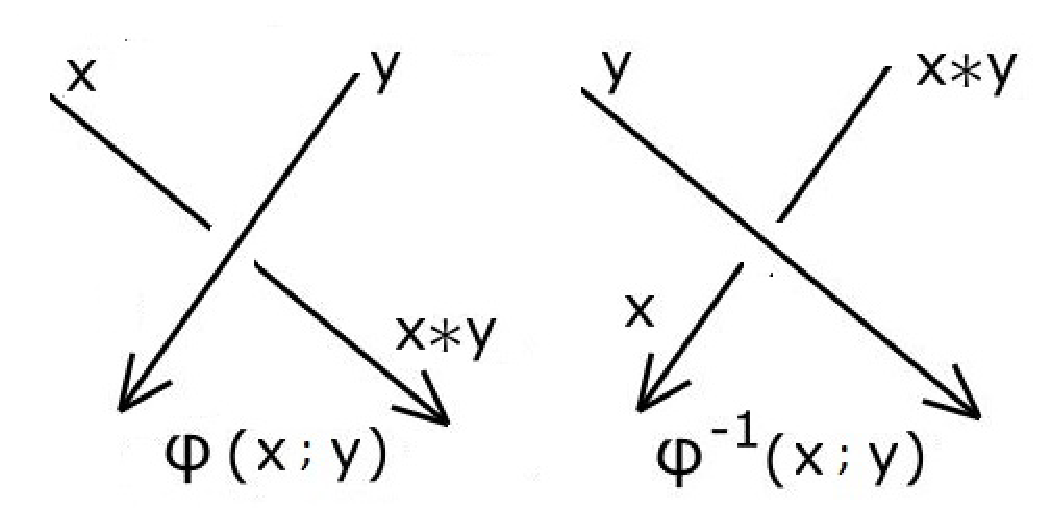}
\caption{The coloring rules.}\label{fig:4}
\end{figure}

\begin{theorem} \cite{Car} \label{cars}
Let us consider a knot (or a link) with a given oriented diagram $D$. Let $C$ be a set of all  possible colorings of the diagram $D.$ For each possible coloring we define the weights of diagram intersections by the 2--cocycle $\phi$ as it is shown in Fig.\ref{fig:4}. Then the following state--sum function is an invariant of the knot (or the link) with the diagram $D$:
$$Z(D , \phi) = \sum\limits_{C} \prod \phi(x , y) ^{\sigma t}, $$ 
 where the product is taken over  all intersections of the diagram $D$ and the sum is taken over the set $C$, signs $\sigma = -1, 1 $ are selected as it is shown in Fig.\ref{fig:4},  $t$ is a formal parameter.
  \end{theorem}
  
  \begin{proof}
 For completeness, we provide the sketch of the proof (for more details see \cite{Car}):

   \begin{figure}[ht]

\includegraphics[width=0.9\textwidth]{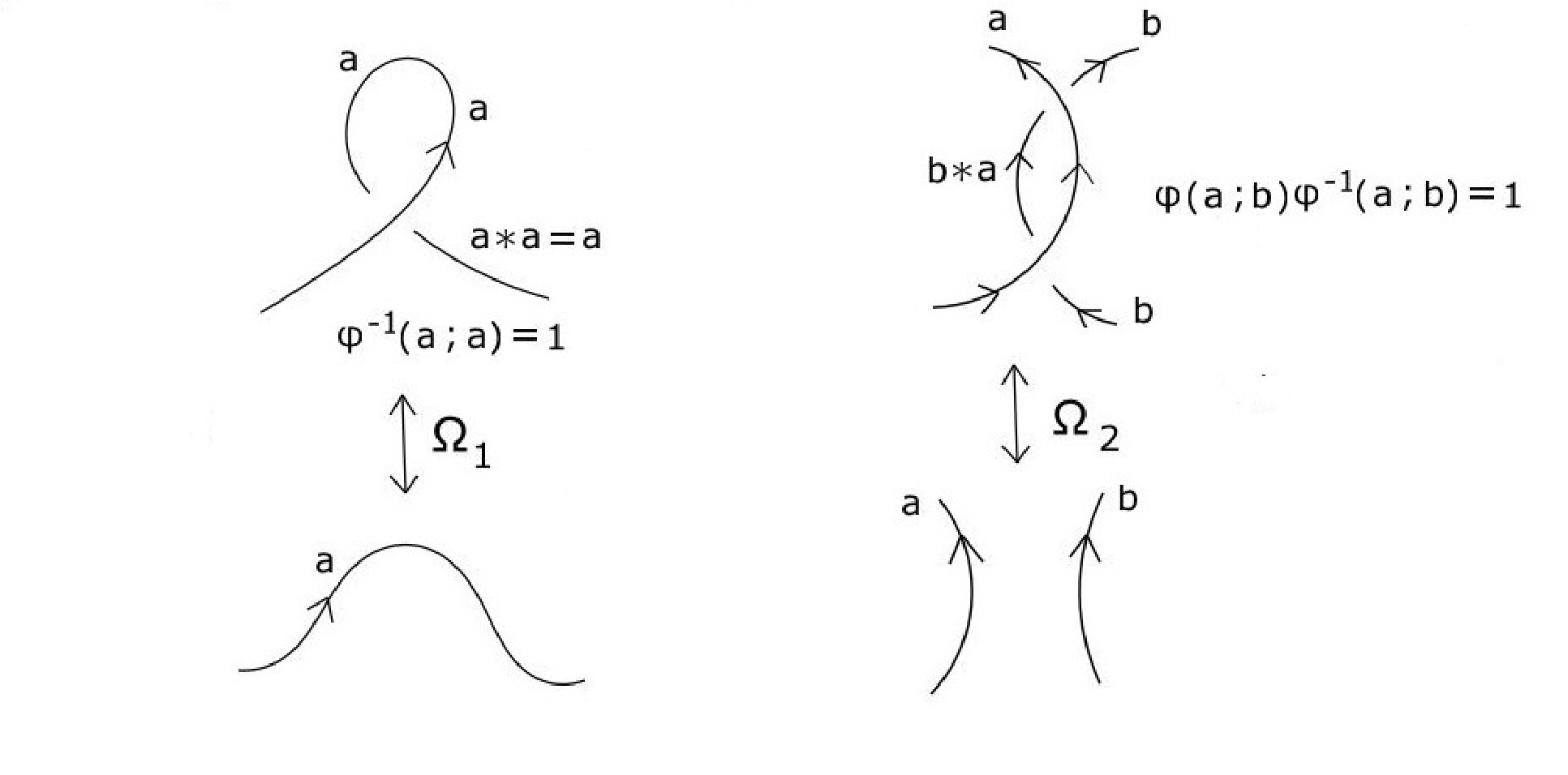}
\caption{The first and second classical moves.}\label{fig:5}
\end{figure}

     1. Consider the first Reidemeister move. Using the first quandle axiom \eqref{qa_1} and the first 2--cocycle identity \eqref{qc_1} we conclude that the state--sum function is preserved under the first Reidemeister move  (see  Fig.\ref{fig:5}).

2. By the same way using the second  quandle axiom  \eqref{qa_2} we conclude that the state--sum function is preserved under the second Reidemeister move (see  Fig.\ref{fig:5}).

  \begin{figure}[ht]
\includegraphics[width=0.9\textwidth]{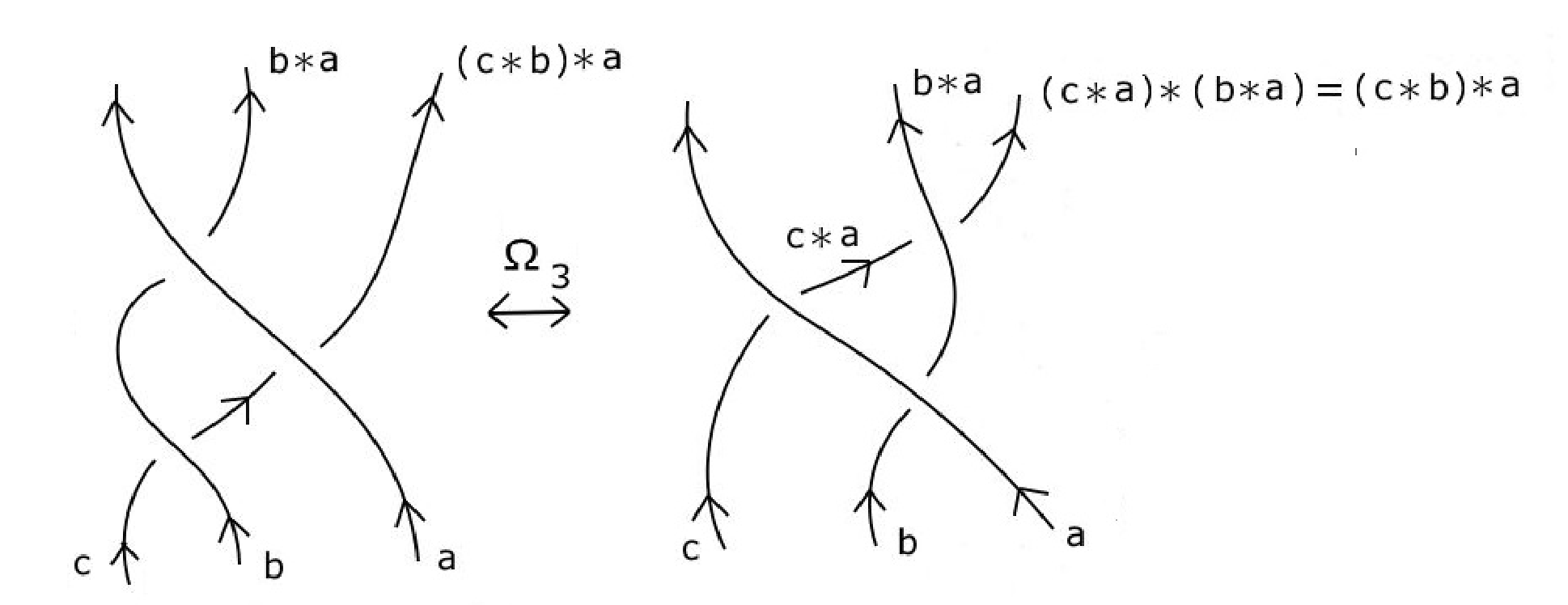}
\caption{The third classical moves.}\label{fig:6}
\end{figure}

 3. The third Reidemeister move remains to be examined. Similarly, using the second and the third quandle axiom \eqref{qa_2}, \eqref{qa_3} and the second 2--cocycle identity \eqref{qc_1}  we conclude that the state--sum function is preserved under the third Reidemeister move (see Fig.\ref{fig:6}).
  \end{proof}

 \begin{remark}
  Formally, other cases of the orientation and the position of the arcs of  diagram $D$ are not analyzed, but they might be checked by the same fashion.
  \end{remark}
  
  \begin{remark}
 An analogue of the Carter--Saito et al. theorem is also holds for knots and links in $\mathbb{RP}^3$  \cite{Gor}. For the definition  of knots and links in $\mathbb{RP}^3$ see  \cite{Man}, \cite{Dro}.  
  \end{remark}
  
\section{Generalization of the Carter--Saito--Jelsovsky--Kamada--Langford  theorem for virtual knots}

In this section we turn to the main result of our article. Consider a virtual knot (or a virtual link) with an oriented diagram $D.$ Then fix  a finite quandle $G,$ its 2--cocycle $\phi$ and an automorphism $f$ of the quandle $G$.
\begin{remark}
Here as usual a quandle automorphism is a bijection $f: G \to G$ which preserves a quandle operation $\forall a, b \in G: f(a \ast b)=f(a)\ast f(b).$   
\end{remark}
Let us color the arcs of a diagram $D$  with the elements of the quandle $G$. We call the coloring possible, if it satisfies the rules demonstrated in  Fig.\ref{fig:4} and Fig.\ref{fig:7}. For each classical intersection we define the weight that is obtained by the 2--cocycle $\phi$ as it is shown in Fig.\ref{fig:4}.  Finally, for each virtual intersection we define the weight is equal to $1$ as it is shown in Fig.\ref{fig:7}.
\begin{remark}
The natural coloring rule for virtual diagrams, which we use in this paper, probably were firstly introduced  in the work \cite{Af}.
\end{remark}

  \begin{figure}[ht]

\includegraphics[width=0.4\textwidth]{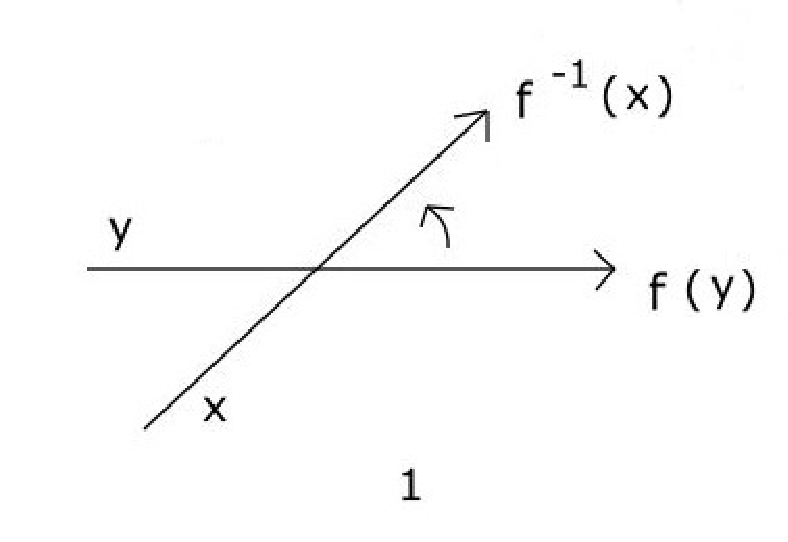}
\caption{The coloring rules for virtual intersection.}\label{fig:7}
\end{figure}

Now we describe how the coloring of a diagram changes under the first three virtual moves (see Fig.\ref{fig:8}). By  simple brute force  it is not difficult to  consider all possible orientations and obtain the following proposition:

\begin{figure}[ht]
\includegraphics[width=0.85\textwidth]{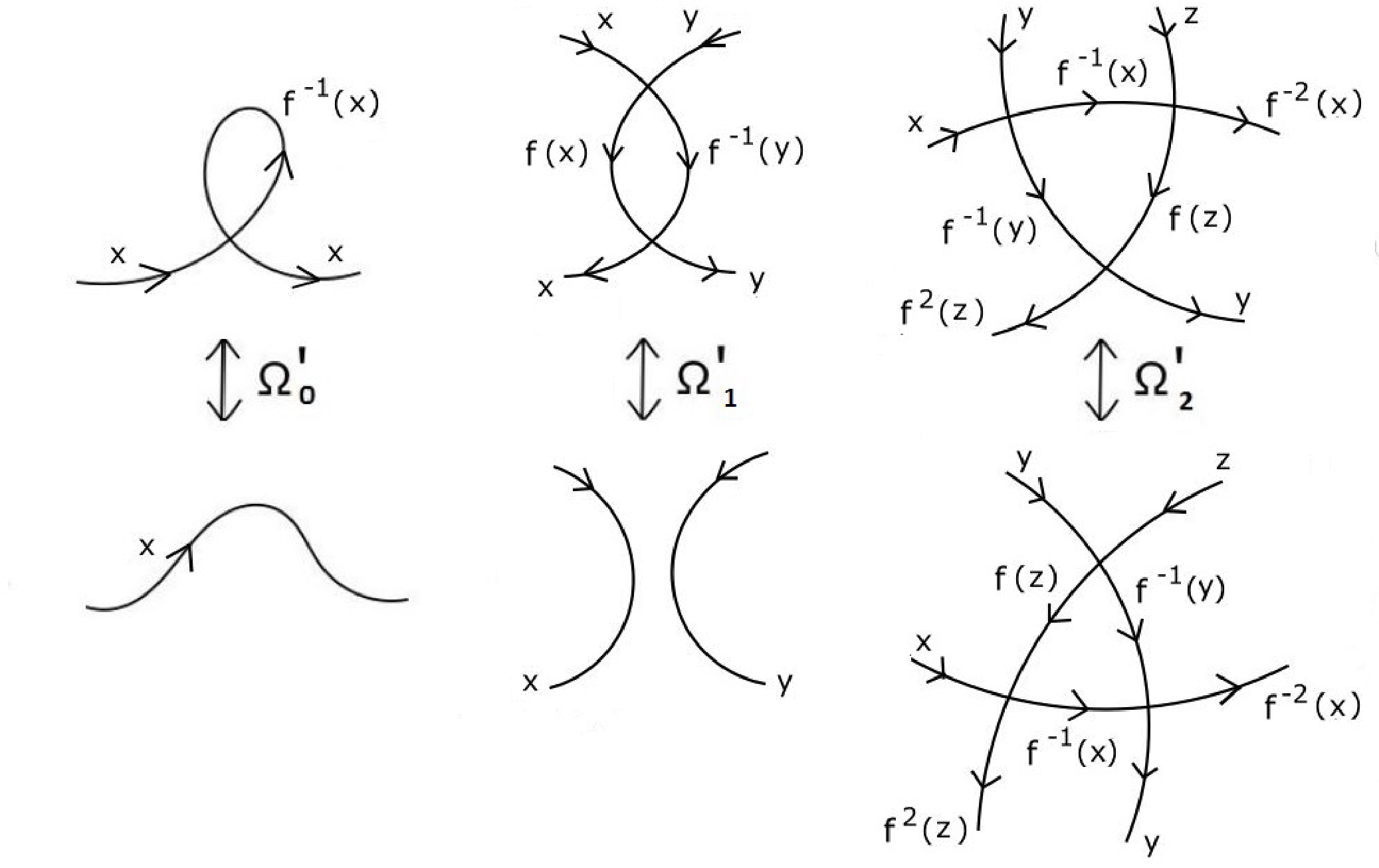}
\caption{The first three virtual moves.}\label{fig:8}
\end{figure}

 \begin{proposition}
 Consider a virtual knot or a link with a diagram $D.$ Then the state--sum function $Z(D, \phi, f) = \sum\limits_{C} \prod \phi(x, y) ^\sigma$ is the invariant of this virtual knot (or link)  under the classical Reidemeister moves and the first three virtual Reidemeister  moves. Where the product is taken over all classical intersections of the given diagram $D$, and the sum is taken over the set of all possible colorings $C$, $\sigma = -1, 1$ is selected as shown in Fig.\ref{fig:4}.
   \end{proposition}

   Now we present how the coloring of a virtual knot (or a link) diagram changes under the move $\Omega'_3 $  (see Fig.\ref{fig:9}) to obtain the following theorem:
  
   \begin{figure}[ht]
\includegraphics[width=0.4\textwidth]{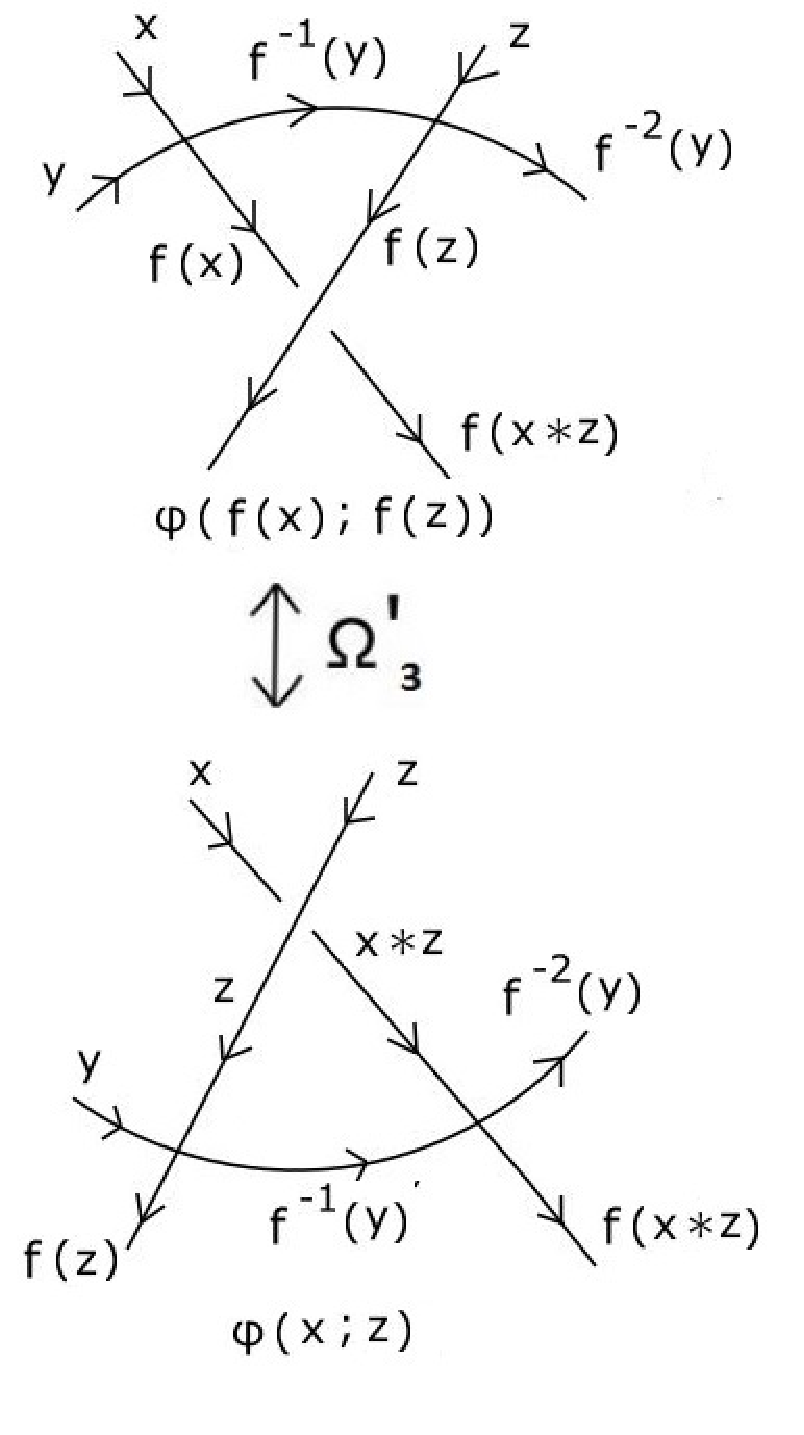}
\caption{ The $\Omega'_3 $ move.}\label{fig:9}
\end{figure}

  \begin{theorem}
 
  1. The function $$Z_1(D , \phi  , f) = \prod\limits_{C} \prod \phi(x , y) ^{\sigma t}$$ is the invariant of a virtual knot (or a virtual link) with a given diagram $D$ under all classical and virtual  moves, where the second product is taken over all classical intersections of the given diagram, and the first product is taken over the set of all possible colorings $C$. We call the function $Z_1$  a state--weight.
  
 2. Consider a fixed automorphism $f$ of a quandle $G$ and  its fixed 2--cocycle $\phi$. Then if $f$  and $\phi$ are aligned with each other -- it means that the following  holds:  $\forall a , b \in G : \phi(a, b)=\phi (f(a), f(b))$, the function $$Z_2(D , \phi , f) =\sum\limits_{C} \prod \phi(x , y) ^{\sigma t}$$ is the invariant of a virtual knot (or link) with a diagram $D$ under all classical and virtual  moves, where the product is taken over all classical intersections of the diagram $D$, and the sum is taken over the set of all possible collorings $C$. We  call the function $Z_2$ a state--sum. 
  
  3. If 2--cocycles $\phi$ and $\phi^{'}$ are cohomologous (see \ref{coh}), then $Z_1(D , \phi  , f)=Z_1(D , \phi^{'} , f)$.
  \end{theorem}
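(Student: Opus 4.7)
The plan is to reduce all three parts to a local analysis of the semi-virtual $\Omega_{3}'$ move of Fig.~10, since Preposition~1 (whose proof is coloring-by-coloring) already secures invariance of both $Z_{1}$ and $Z_{2}$ under the classical Reidemeister moves and the first three virtual moves. The central local fact to establish about $\Omega_{3}'$ is the existence of a bijection $\Phi$ between colorings of the two diagrams, with the property that arcs outside the move keep their colors while the colors $(x,y)$ at the unique classical crossing participating in the move change to $(f(x), f(y))$; the various orientation and sign variants reduce to this single configuration by the Kauffman trick of Remark~2.

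For Part~2, the hypothesis $\phi(a,b) = \phi(f(a),f(b))$ immediately gives $\phi(x,y)^{\sigma t} = \phi(f(x),f(y))^{\sigma t}$ pointwise, so $\Phi$ preserves the weight of every single coloring and the sum $Z_{2}$ is invariant. Part~1 cannot be argued pointwise, but one can instead exploit the fact that the component-wise action $C \mapsto f \cdot C$, which applies $f$ to the color of every arc, is a bijection on the coloring set, because quandle automorphisms preserve both the classical quandle relations and the coloring rules of Fig.~8. Re-indexing the outer product by this bijection yields
\[ \prod_{C} \phi(f(x(C)), f(y(C)))^{\sigma t} = \prod_{C} \phi(x(C), y(C))^{\sigma t}, \]
which is exactly what is needed for $Z_{1}^{L} = Z_{1}^{R}$, and no $f$-equivariance of $\phi$ is required.

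For Part~3, write $\phi' = \phi \cdot \delta g$ for a 1-cochain $g \colon G \to R^{\times}$ with coboundary $(\delta g)(a,b) = g(a \ast b)/g(a)$. For any fixed coloring $C$, the standard Carter--Saito telescoping shows that
\[ \prod_{\text{classical crossings}} (\delta g)(x, y)^{\sigma t} = 1, \]
because along every under-arc the factor $g(a)$ appears once at the crossing where the arc emerges (as the numerator $g(x \ast y)$) and once at the crossing where it ends (as the denominator $g(x)$), with opposite exponents that cancel. Consequently the inner product in $Z_{1}$ agrees on $\phi$ and $\phi'$ coloring by coloring, and taking the outer product over $C$ gives $Z_{1}(D, \phi, f) = Z_{1}(D, \phi', f)$.

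The main obstacle is confirming the local picture at $\Omega_{3}'$: one must verify that $\Phi$ takes precisely the claimed form and that the weight-change at the single classical crossing is the joint substitution $\phi(x,y) \mapsto \phi(f(x), f(y))$ rather than a one-sided one, in every orientation and sign variant. A secondary point to check in Part~3 is that the telescoping is not disrupted when an under-arc passes through virtual crossings where $f$ may act on colors; this is where compatibility between $g$ and the coloring rule of Fig.~8 must be invoked. Once the local identities are in place, each of Parts~1, 2, 3 follows from a one-line global argument (re-indexing by the $f$-action, the pointwise $f$-invariance of $\phi$, and coboundary telescoping, respectively).
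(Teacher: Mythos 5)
Your treatment of Parts 1 and 2 matches the paper's: invariance reduces to the single semi-virtual move $\Omega_3'$, under which the weight of the one classical crossing changes from $\phi(x,y)$ to $\phi(f(x),f(y))$; Part 2 then follows pointwise from the hypothesis $\phi(a,b)=\phi(f(a),f(b))$, and Part 1 follows by re-indexing the outer product over colorings along the bijection $C\mapsto f\cdot C$, which is exactly the paper's argument ($P_1=\prod\phi(x_i,y_i)$ versus $P_2=\prod\phi(f(x_i),f(y_i))$).

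Part 3, however, has a genuine gap. The classical Carter--Saito telescoping does not work coloring by coloring here, because a long under-arc changes its color by $f$ each time it passes a virtual crossing: if it leaves a classical crossing with color $\omega$ and reaches the next classical undercrossing after $n$ virtual crossings, its coboundary contribution is $\psi^{-1}(\omega)\psi(f^{n}(\omega))$, which is not $1$ in general. You flag this as ``a secondary point to check'' and propose to invoke a compatibility between $g$ and the rule of Fig.~8, but no such compatibility is assumed in the theorem and none is available; indeed, if the cancellation held for each fixed coloring, Part 3 would equally hold for the state-sum $Z_2$, which the theorem conspicuously does not claim. The paper's actual argument is different and essential: finiteness of $G$ gives a minimal $k$ with $f^{k}=\mathrm{id}$, and one multiplies the contributions of the long arc over the orbit $A, f(A),\dots,f^{k-1}(A)$ of colorings, obtaining $\prod_{j=0}^{k-1}\psi^{-1}(f^{j}(\omega))\,\psi(f^{j+n}(\omega))=1$ because $j\mapsto j+n \pmod{k}$ permutes the exponents. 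The cancellation thus happens only across $f$-orbits of colorings, never within a single one, which is precisely why Part 3 is stated for the state-weight $Z_1$ (a product over colorings). You need to replace your telescoping step with this orbit argument.
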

  \begin{proof}

  1. It suffices to prove that $Z_1$ is invariant under $\Omega'_3 $. Values of $Z_1$ before and after  $\Omega'_3 $ move are different on factors which are equal to $P_1:=\prod \phi(x_i , y_i)$ and  $P_2:=\prod \phi(f(x_i) , f(y_i))$ correspondingly. 

  Let us consider $A$ -- an arbitrary possible coloring of $D$, define  the colloring $f(A)$ as follows: if in the colloring  $A$  an arc is colored by an element $a$, then in the coloring  $f(A)$ the same arc is colored by the element $f(a).$ As soon as  $f$ is an  automorphism we conclude that if $A$ is a possible coloring of $D$, then $f(A)$  is a possible coloring  of $D$ too. Moreover, it is easy to see that $f$ acts on the set of all possible collorings as a permutation. So, we obtain that $P_1$ and $P_2$ are different only on an order of factors,  that completes the proof.

  2. The proof is similar to the proof of the Carter--Saito et al. theorem for classical knots  (see Fig.\ref{fig:5}, \ref{fig:6}, \ref{fig:8} and \ref{fig:9}).

  3. Let us introduce an equivalence relation on the set $C$ of all possible collorings: two possible collorings $A_1$ and $A_2$ are equivalent if $A_2=f^k(A_1)$ for some $k \in \mathbf{N}.$  Since the quandle $G$ is finite, then there exists a minimal $n$ such that $id=f^{n}$, and therefore the introduced  relation is  the true equivalence relation. Denote by $C_A$ an equivalence class of a colloring $A,$ using this notation we can rewrite $Z_1$ as follows:
   $$Z_1(D , \phi  , f) = \prod\limits_{C_A} \Bigr(\prod \limits_{B \in C_A} \prod \phi(x , y) ^{\sigma t}\Bigl),$$
    where the third product is taken over all classical intersections of $D$, the second product is taken over all possible collorings $B$ belonged to a selected  equivalence class $C_A$ and the first --  over all  equivalence classes of possible collorings. 
  
 Now, we are ready to prove the third statement. Easy to see, that for this goal it suffices to show that $Z_1(D , \phi  , f) =1$  for any coboundary 2--cocycle  $\phi (x , y)= \psi (x) \psi^{-1}( x \ast y)$ (see \ref{triv}). 
 
 So, let us fix a   coboundary 2--cocycle and consider a long virtual arc of $D$ (i.e. a part of the $D$ between two upper intersections which contains only virtual and bottom intersections, see Fig.\ref{fig:10}).  By direct computation a contribution of a possible colloring $A$  to the $Z_1$ on the long virtual arc  is equal to $\psi^{-1} (\omega) \psi( f^{m}(\omega))$ (here $m$ is some integer which depends only on number of virtual intersections crossed the long virtual arc).  A contribution of all possible collorings $B \in C_A$ to the $Z_1$ on the long virtual arc  is equal to  
 $$\psi^{-1} (\omega) \psi\Bigr( f^{m}(\omega)\Bigl)\psi^{-1} \Bigr(f(\omega)\Bigl) \psi\Bigr( f^{m+1}(\omega)\Bigl)\dots \psi^{-1} \Bigr(f^{|C_A|-1}(\omega)\Bigl) \psi\Bigr( f^{|C_A|-1+m}(\omega)\Bigl). $$
 
 Rewriting the last expression as 
 $$\psi^{-1} (\omega) \psi^{-1} \Bigr(f(\omega)\Bigl) \dots \psi^{-1} \Bigr(f^{|C_A|-1}(\omega)\Bigl) \psi\Bigr( f^{m}(\omega)\Bigl)\psi\Bigr( f^{m+1}(\omega)\Bigl) \dots\psi\Bigr( f^{|C_A|-1+m}(\omega)\Bigl) $$ and taking into account that $f$ acts on $C_A$ as a permutation, we obtain that this product is equal to $1$, that completes the proof.
  \end{proof}

\begin{figure}[ht]
\includegraphics[width=0.8\textwidth]{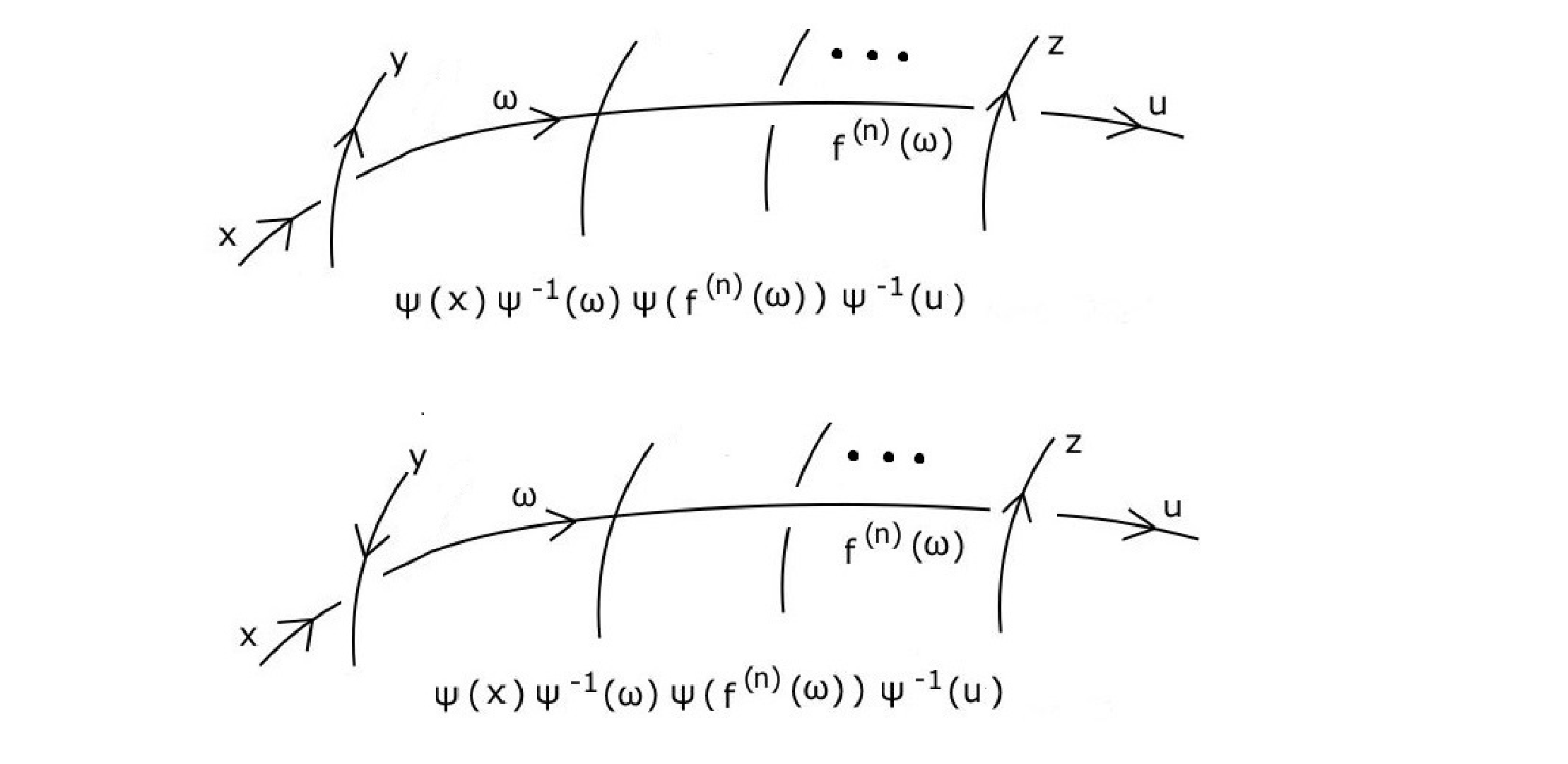}
\caption{A long virtual arc.}\label{fig:10}
\end{figure}

In conclusion of this section, we formulate the following theorem which are obtained as the simple corollary of the speculations above:

  \begin{theorem}
  1. The function $Z_3(D , \phi) =\sum \prod\limits_{C} \prod \phi(x , y) ^{\sigma t}$ is the invariant of a virtual knot (or  virtual link) under all classical and virtual  moves, where the sum  is taken over all automorphism of $G$.

2. The function $$Z_4(D , \phi  , f) = \sum\limits_{C_A} \Bigr(\prod \limits_{B \in C_A} \prod \phi(x , y) ^{\sigma t}\Bigl),$$
    where the second product is taken over all classical intersections of $D$, the first product is taken over all possible collorings $B$ belonged to a selected  equivalence class $C_A$ and the sum --  over all  equivalence classes of possible collorings.

3. If 2--cocycles $\phi$ and $\phi^{'}$ are cohomologous (see \ref{coh}), then $Z_4(D , \phi  , f)=Z_4(D , \phi^{'} , f)$.
  
  \end{theorem}

 \section{Examples} Consider the quandle $Q=\mathbb{Z}_4$ with the operation $\ast: i\ast j=2j-i$ $(mod$ $ 4)$. We have the non-trivial 2--cocycle $\phi: Q \rightarrow \mathbb {Z}$, which is defined as follows: $\phi(a_1, b_1)=\phi(a_1, b_2)=t$ otherwise $\phi =1$, where $t$ is a generator under multiplication of $\mathbb {Z}$, and $a_1=0$, $b_1=1$, $a_2=2$, $b_2=3$. It is easy to verify that the automorphism $f(b)=b \ast a_1$ is aligned with the given  2--cocycle $\phi$.

Now we calculate the state--sum related with the quandle $Q$ and the automorphism $f$ for the links as shown in Fig.\ref{fig:11}.

 \begin{itemize}
     \item[1.] For the first example the state--sum is equal to $8(1+t)$.
     \item[2.] For the second example the state--sum is equal to $8$.
     \item[3.] For the third example the state--sum is equal to $4(1+t)$.
     \item[4.] For the fourth example the state--sum is equal to $4(1+t)$.
 \end{itemize}

  \begin{figure}[ht]
\includegraphics[width=0.75\textwidth]{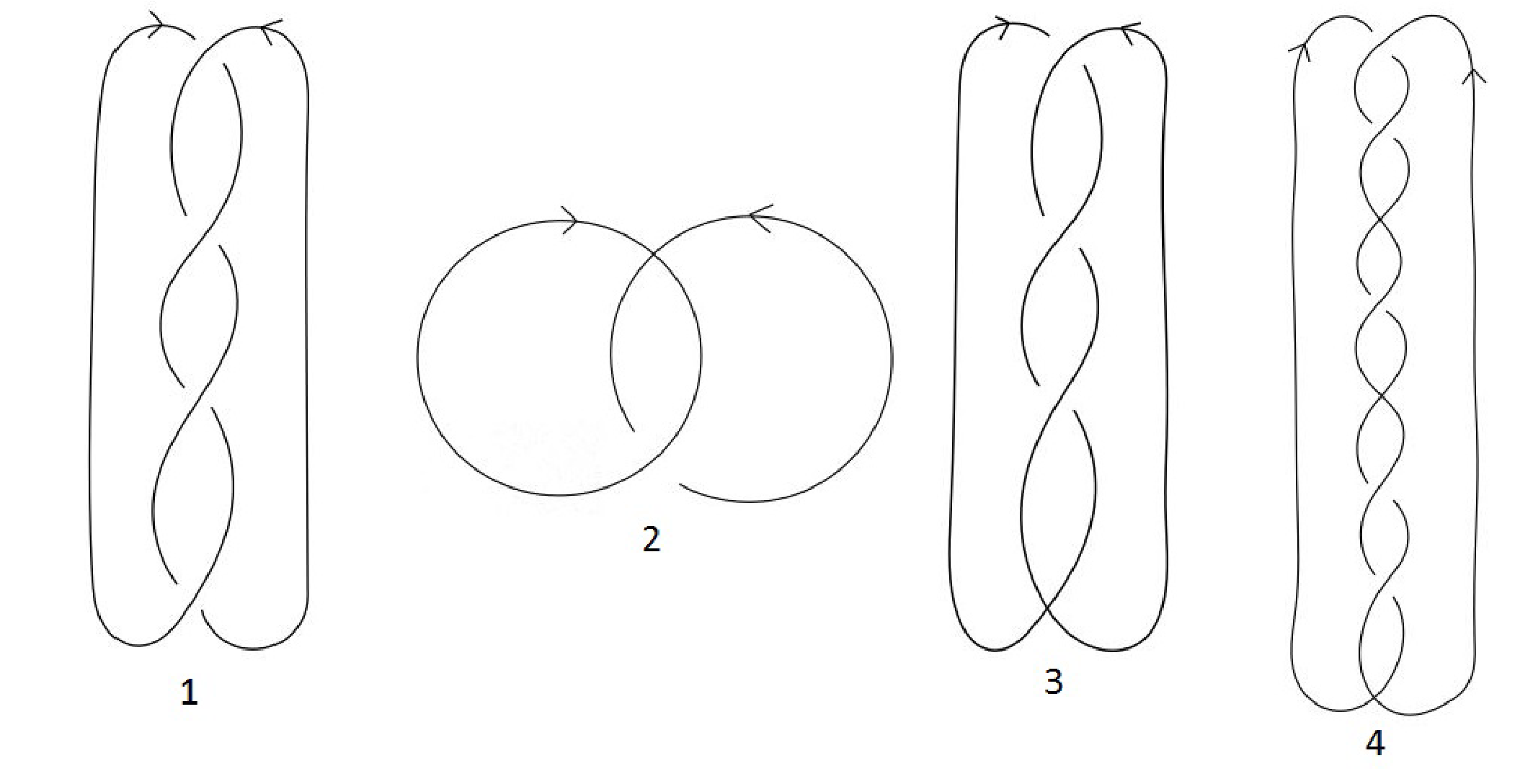}
\caption{Examples.}\label{fig:11}
\end{figure}

\subsection*{Acknowledgments}
The author is  grateful to  Dmitry V. Talalaev (Moscow State University, Faculty of Mechanics and Mathematics) for the support and fruitful discussions,  Evgenii Pavlov (National Research University ``Higher School of Economics", Faculty of Mathematics) and Ammar Basheer (Ural Federal University, Ekaterinburg Department of Humanities)  for careful reading of the article and helpful remarks.

The work was partially supported by the Basis foundation, the grant Leader (Math)
20-7-1-21-1.

\end{document}